\title{On a Conjecture on the Representation of Positive Integers as the Sum of Three Terms of the Sequence $ \floor{\frac{n^2}{a}}$}
\author{ 
        \textbf{Sebastian Tim Holdum} \\ 
        {\small Niels Bohr Institute, University of Copenhagen, Denmark}\\
        \texttt{sebastian.holdum@nbi.dk} \vspace{10pt} \\
        \textbf{Frederik Ravn Klausen} \\
        {\small Department of Mathematics, University of Copenhagen, Denmark}\\ 
        \texttt{tlk870@alumni.ku.dk} \vspace{10pt} \\  
        \textbf{Peter Michael Reichstein Rasmussen}  \\
        {\small Department of Mathematics, University of Copenhagen, Denmark}\\ 
        \texttt{nmq584@alumni.ku.dk}
       }
\date{}
\newcommand{\floor}[1] {
\left\lfloor #1 \right\rfloor
}
\newcommand{\fpart}[1] {
\left\langle #1 \right\rangle
}
\newcounter{thmcounter}
\newtheorem{theorem}[thmcounter]{Theorem}
\newtheorem{corollary}[thmcounter]{Corollary}
\newtheorem{conjecture}[thmcounter]{Conjecture}
\newtheorem{observation}[thmcounter]{Observation}
\theoremstyle{definition}
\newtheorem{definition}[thmcounter]{Definition}
\begin{document}
\maketitle

\begin{abstract}
We prove some cases of a conjecture by Farhi on the representation of every positive integer as the sum of three terms of the sequence $ \floor{\frac{n^2}{a}}$. This is done by generalizing a method used by Farhi in his original paper.
\end{abstract}

\section{Introduction}
In the following we let $\mathbb{N}$ denote the set of non-negative integers, $\floor{\cdot}$ is the integer part function, and $\fpart{\cdot}$ is the fractional part function.

A classical result by Legendre \cite{threeSquare} states that every natural number not of the form $4^s(8t+7), s, t\in \mathbb{N}$ can be written as the sum of three squares.

 In relation to this Farhi recently conjectured the following:

\begin{conjecture}[Farhi \cite{Farhi2}]\label{main}
Let $a\geq3$ be an integer. Then every natural number can be represented as the sum of three terms of the sequence $\left( \floor{\frac{n^2}{a}} \right)_{n\in \mathbb{N}}$.
\end{conjecture}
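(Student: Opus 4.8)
The plan is to convert the additive problem about the floor sequence into a statement about sums of three squares with prescribed residues, and then to invoke Legendre's three-square theorem after arranging the congruence data. Writing $\floor{\frac{x^2}{a}} = \frac{x^2}{a} - \fpart{\frac{x^2}{a}}$ and summing over three terms, I get that $n = \floor{\frac{x_1^2}{a}} + \floor{\frac{x_2^2}{a}} + \floor{\frac{x_3^2}{a}}$ holds if and only if
\[
x_1^2 + x_2^2 + x_3^2 = an + a\Big(\fpart{\tfrac{x_1^2}{a}} + \fpart{\tfrac{x_2^2}{a}} + \fpart{\tfrac{x_3^2}{a}}\Big).
\]
The crucial observation is that $a\fpart{\frac{x_i^2}{a}} = x_i^2 \bmod a$ depends only on the residue class of $x_i$ modulo $a$. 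Hence, if I fix residues $r_1,r_2,r_3 \in \{0,1,\dots,a-1\}$ and set $s = \sum_{i}(r_i^2 \bmod a)$, the task reduces to representing the single integer $an+s$ as a sum $x_1^2+x_2^2+x_3^2$ with $x_i \equiv r_i \pmod a$ for each $i$.

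To produce such a constrained representation I would substitute $x_i = r_i + a u_i$, which turns the problem (in the spirit of Farhi's original argument) into the equivalent Diophantine equation
\[
a\,(u_1^2+u_2^2+u_3^2) + 2\,(r_1 u_1 + r_2 u_2 + r_3 u_3) = n - \floor{\tfrac{r_1^2}{a}} - \floor{\tfrac{r_2^2}{a}} - \floor{\tfrac{r_3^2}{a}}.
\]
The plan is then to choose the residues $r_i$ as a function of $n$ (in fact only of $n$ modulo a fixed power of $2$) so that two things hold. First, $an+s$ should avoid the forbidden shape $4^k(8\ell+7)$, so that Legendre's theorem applies. Second, the local condition should be met: reducing the displayed equation modulo $a$ leaves the linear congruence $2\sum_i r_i u_i \equiv n - \sum_i \floor{r_i^2/a} \pmod a$, which is solvable as soon as the $r_i$ are chosen with a suitable unit among them. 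The genuine existence of a three-square representation in the prescribed arithmetic progressions is then supplied by a refinement of Legendre's theorem drawn from the classical theory of ternary forms; for the admissible values of $a$ this refinement guarantees that every sufficiently large $an+s$ meeting the local conditions is actually represented.

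The hard part will be reconciling the two competing constraints simultaneously for \emph{all} $n$: the residues $r_i$ must keep $an+s$ out of the class $7 \pmod 8$ (and away from the $4^k$ trap), while the same residues must keep the congruence-constrained representation solvable. These requirements interact through the value of $s \bmod 8$ and through the structure of the squares $r_i^2 \bmod \gcd(a,8)$, and for some $a$ no single choice of residues reconciles them across every residue class of $n$; this is precisely why the conjecture can be settled only in part. I therefore expect the proof to split into cases according to $a \bmod 8$ and the $2$-adic valuation of $a$, with the genuinely delicate situations being those where $a$ is even, since then the values $r_i^2 \bmod a$ cannot be adjusted freely modulo $8$. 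Finally, I would dispose of the finitely many small $n$ not covered by the ``sufficiently large'' hypothesis by direct verification.
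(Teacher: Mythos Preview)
The statement you are attempting is a \emph{conjecture}: the paper does not prove it, and indeed it remains open in general. The paper establishes it only for the finite list $a\in\{4,7,8,9,20,24,40,104,120\}$ (and their multiples by squares). Your proposal, despite starting as if it will handle all $a\ge3$, tacitly concedes this at the end, so no comparison of full proofs is possible here.

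That said, the strategy you outline for the \emph{partial} cases is genuinely different from the paper's, and in a way that introduces an avoidable gap. You fix residues $r_1,r_2,r_3$ first and then seek a representation $an+s=x_1^2+x_2^2+x_3^2$ with $x_i\equiv r_i\pmod a$. This is a three-squares-in-progressions problem, and you appeal to ``a refinement of Legendre's theorem drawn from the classical theory of ternary forms'' to supply it. Such refinements exist, but they are far deeper than Legendre's theorem (Siegel mass formula, spinor genera, or analytic results of Duke--Schulze-Pillot type), they typically apply only for sufficiently large targets, and the implied constants need not be effective. Invoking this as a black box, and then sweeping away the small $n$ by ``direct verification'' with no explicit bound, leaves a real gap.

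The paper avoids all of this by a simple device you did not use. It restricts attention to those values $r$ that are \emph{uniquely} expressible (within the same residue class mod $a$) as a sum of three elements of $\mathcal{Q}_a\cup\{0\}$; this is the set $\mathcal{R}_a$. If $r\in\mathcal{R}_a$ and $aN+r$ avoids the $4^s(8t+7)$ shape, then \emph{any} representation $aN+r=A^2+B^2+C^2$ provided by Legendre automatically satisfies $(A^2\bmod a)+(B^2\bmod a)+(C^2\bmod a)=r$, because the residue sum is forced by uniqueness. No congruence-constrained three-square theorem is needed, and the argument is effective. The trade-off is that this works only for those $a$ for which $\mathcal{R}_a$ is rich enough to dodge the Legendre exclusion for every $N$, which is exactly the list in Corollary~\ref{vals}.
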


The conjecture was confirmed by Farhi \cite{Farhi1} and Mezroui, Azizi, and Ziane \cite{analyt} for $a\in\{3, 4, 8\}$.

In this paper we generalize the method used by Farhi for $a=4$, and partially for $a=3$, to prove that the conjecture holds for $a \in \{$4, 7, 8, 9, 20, 24, 40, 104, 120$\}$. The method uses Legendre's three-square theorem and properties of quadratic residues.

We also note that the set of integers, $a$, such that Conjecture \ref{main} holds is closed under multiplication by a square.

\section{Method and results}
We start by introducing the following sets:
\begin{definition}
For any nonzero $a\in \mathbb N$ we define
\begin{align*}
\mathcal{Q}_a=\{0<\varphi<a \mid \exists x \in \mathbb{Z} \colon \varphi\equiv x^2\pmod a \}.
\end{align*}
Therefore, $\mathcal{Q}_a$ is the set of quadratic residues modulo $a$.
\end{definition}

\begin{definition}
	For any nonzero $a \in \mathbb{N}$ we define
	\begin{align*}
	\mathcal{A}_a=\{\varphi\in\mathbb{N} \mid \exists x,y,z \in \mathcal{Q}_a \cup \{ 0\} \colon \varphi=x+y+z\}.
	\end{align*}
	Thus, $\mathcal{A}_a$ is the set of integers that can be written as the sum of three elements of $\mathcal{Q}_a\cup \{0\}$.
\end{definition}

\begin{definition}
	For any nonzero $a\in \mathbb N$ we define
	\begin{align*}
	\mathcal{R}_a=\{\varphi\in \mathcal{A}_a \mid \forall \psi \in \mathcal{A}_a \colon \varphi\equiv \psi \pmod{a} \Rightarrow \varphi = \psi \}.
	\end{align*}
	So, $\mathcal{R}_a$ is the set of integers that can be written as the sum of three elements of $\mathcal{Q}_a\cup \{0\}$, and such that no other integer in the same residue class modulo $a$ has this property.
\end{definition}

Now, we are ready to formulate the main result.
\begin{theorem}\label{hovedresultat}
Let $a\in \mathbb{N}$ be nonzero and assume that for every $k\in \mathbb{N}$ there exists an $r\in \mathcal{R}_a$ such that $ak+r \neq 4^s(8t+7)$ for any $s, t \in \mathbb{N}$. Then every $N\in \mathbb{N}$ can be written as the sum of three terms of the sequence $\left( \floor{\frac{n^2}{a}} \right)_{n\in \mathbb{N}}$.
\end{theorem}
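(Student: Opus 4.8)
The plan is to reduce the task of representing $N$ as a sum of three floor-terms to an ordinary three-square representation of the shifted integer $aN+r$, and then to exploit the uniqueness built into $\mathcal{R}_a$ to force the fractional corrections to cancel exactly.

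First I would record the elementary identity underlying everything. For any integer $n$, writing $n^2 = a\floor{n^2/a} + (n^2\bmod a)$ gives $\floor{n^2/a} = \bigl(n^2-(n^2\bmod a)\bigr)/a$, and the residue $n^2\bmod a$ always lies in $\mathcal{Q}_a\cup\{0\}$. Hence for any integers $x,y,z$, setting $\psi := (x^2\bmod a)+(y^2\bmod a)+(z^2\bmod a)$, we have $\psi\in\mathcal{A}_a$ and
\[
\floor{x^2/a} + \floor{y^2/a} + \floor{z^2/a} = \frac{(x^2+y^2+z^2)-\psi}{a}.
\]
Thus the left-hand side equals $N$ exactly when $x^2+y^2+z^2 = aN+\psi$, i.e.\ when the three squares sum to $aN$ plus precisely their own residue-sum.

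Now, given $N$, I would invoke the hypothesis to choose $r\in\mathcal{R}_a$ with $aN+r\neq 4^s(8t+7)$ for all $s,t\in\mathbb{N}$. By Legendre's three-square theorem we may write $aN+r = x^2+y^2+z^2$ for some $x,y,z\in\mathbb{Z}$. Reducing this modulo $a$ yields $\psi \equiv x^2+y^2+z^2 \equiv aN+r \equiv r \pmod{a}$. The crucial observation is that $\psi\in\mathcal{A}_a$ and $\psi\equiv r\pmod a$, while $r\in\mathcal{R}_a$ is by definition the \emph{unique} element of $\mathcal{A}_a$ in its residue class; therefore $\psi = r$. Substituting back into the identity gives $\floor{x^2/a}+\floor{y^2/a}+\floor{z^2/a} = (aN+r-r)/a = N$, which is the desired representation.

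The main obstacle—and really the only nontrivial idea—is this uniqueness step. A priori Legendre only delivers the \emph{value} $x^2+y^2+z^2 = aN+r$, and there is no reason the individual residues $x^2\bmod a$, $y^2\bmod a$, $z^2\bmod a$ should reassemble into exactly $r$ rather than into some other representative of the same class modulo $a$; if they did not, the floor-sum would miss $N$ by a multiple of $1$. The definition of $\mathcal{R}_a$ is engineered precisely to eliminate this ambiguity: since the residue-sum $\psi$ automatically lands in $\mathcal{A}_a$, membership of $r$ in $\mathcal{R}_a$ collapses the congruence $\psi\equiv r$ into the equality $\psi=r$. Once that is recognized, the remainder of the argument is just the bookkeeping identity above together with a single application of the three-square theorem.
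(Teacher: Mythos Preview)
Your proof is correct and follows essentially the same approach as the paper: choose $r\in\mathcal{R}_a$ via the hypothesis, apply Legendre's three-square theorem to $aN+r$, use the uniqueness built into $\mathcal{R}_a$ to force the residue-sum to equal $r$ exactly, and read off the floor-sum identity. The only difference is organizational---you front-load the floor identity and phrase the key step as $\psi=r$ rather than splitting into integer and fractional parts---but the argument is the same.
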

\begin{proof}
Let $N\in \mathbb{N}$ be fixed. By assumption we can choose $r \in \mathcal{R}_a$ such that $aN+r \neq 4^s(8t+7)$ for any $s, t \in \mathbb{N}$. By Legendre's theorem it follows that $aN+r$ can be written of the form
\begin{equation}\label{eq}
  aN+r = A^2+B^2+C^2
\end{equation}
for some $A, B, C\in \mathbb{N}$. Now we have
\begin{equation*}
  r\equiv A^2+B^2+C^2 \pmod{a},
\end{equation*}
so
\begin{equation}\label{red}
r = (A^2 \bmod a) + (B^2 \bmod a) + (C^2 \bmod a),
\end{equation}
since $r\in \mathcal{R}_a$. 
Dividing by $a$ and separating the integer and fractional parts of the right hand side in \eqref{eq}, we get
\begin{equation*}
  N+\frac{r}{a} = \floor{\frac{A^2}{a}}+\floor{\frac{B^2}{a}}+\floor{\frac{C^2}{a}}+\fpart{\frac{A^2}{a}}+\fpart{\frac{B^2}{a}}+\fpart{\frac{C^2}{a}},
\end{equation*}
and from \eqref{red} we have
\begin{align*} 
\frac{r}{a} = \fpart{\frac{A^2}{a}}+\fpart{\frac{B^2}{a}}+\fpart{\frac{C^2}{a}},
\end{align*}
so
\begin{align*}
N = \floor{\frac{A^2}{a}}+\floor{\frac{B^2}{a}}+\floor{\frac{C^2}{a}}.
\end{align*}
\end{proof}

 Since we can find the sets $\mathcal{R}_a$ by computation, we can now apply the main theorem to get the following corollary.
\begin{corollary} \label{vals}
Conjecture \ref{main} is satisfied for $a\in \{4, 7, 8, 9, 20, 24, 40, 104, 120\}$.
\end{corollary}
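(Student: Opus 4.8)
The plan is to obtain all nine cases from Theorem \ref{hovedresultat}, so that the entire task reduces to checking its hypothesis for each $a$: that for every $k \in \mathbb{N}$ there is some $r \in \mathcal{R}_a$ with $ak + r \neq 4^s(8t+7)$ for all $s, t \in \mathbb{N}$. Each set $\mathcal{R}_a$ is finite and can be produced by a direct computation, so the real issue is to replace the quantifier ``for every $k$'' by finitely many verifications.

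The device that achieves this is a congruence restriction on the forbidden numbers. If $m = 4^s(8t+7)$ then either $s = 0$ and $m \equiv 7 \pmod 8$, or $s \geq 1$, so that $4 \mid m$ and hence $m \equiv 0$ or $4 \pmod 8$. Therefore the forbidden numbers all lie in the classes $0, 4, 7 \pmod 8$, and conversely every integer congruent to $1, 2, 3, 5,$ or $6 \pmod 8$ is a sum of three squares. It thus suffices, for each $k$, to exhibit $r \in \mathcal{R}_a$ with $ak + r \equiv 1,2,3,5,6 \pmod 8$; since $ak + r \bmod 8$ depends only on $k \bmod 8$, this is a finite check.

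Concretely, for each $a$ I would first list the nonzero quadratic residues $\mathcal{Q}_a$, then form $\mathcal{A}_a$ as the set of threefold sums of elements of $\mathcal{Q}_a \cup \{0\}$, and finally extract $\mathcal{R}_a$ by keeping those elements that are alone in their residue class modulo $a$. Reducing $\mathcal{R}_a$ modulo $8$ then decides each case. When $8 \mid a$ --- as for $8, 24, 40, 104, 120$ --- one has $ak + r \equiv r \pmod 8$ for all $k$, so it is enough that $\mathcal{R}_a$ contains a single element in $\{1,2,3,5,6\} \pmod 8$. When $a$ is odd ($7, 9$) or $a \equiv 4 \pmod 8$ ($4, 20$), the value $ak \bmod 8$ ranges over a controlled set as $k$ varies, and one verifies that for each such value some $r \in \mathcal{R}_a$ carries $ak$ into $\{1,2,3,5,6\} \pmod 8$; for example $\mathcal{R}_7 = \{4,6\}$ and $\mathcal{R}_9 = \{1,4,7,8\}$ already meet this requirement.

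The genuine work lies in computing $\mathcal{R}_a$ for the larger moduli, in particular $a = 104$ and $a = 120$, and in confirming that the residues of $\mathcal{R}_a$ modulo $8$ cover every shift $ak \bmod 8$. This covering is the crux and is not automatic: it is exactly the property distinguishing these nine moduli, and it would fail for a typical $a$. Once it is checked in each case, Theorem \ref{hovedresultat} delivers the three-term representation, establishing Conjecture \ref{main} for that value of $a$.
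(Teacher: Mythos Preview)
Your proposal is correct and follows essentially the same approach as the paper: compute $\mathcal{R}_a$ for each of the nine values, observe that numbers of the form $4^s(8t+7)$ are congruent to $0$, $4$, or $7$ modulo $8$, and then check for each residue of $k$ modulo $8$ that some $r \in \mathcal{R}_a$ puts $ak+r$ into $\{1,2,3,5,6\}\pmod 8$. The paper records the sets $\mathcal{R}_a$ in a table (including $\mathcal{R}_{104}=\{99\}$ and $\mathcal{R}_{120}=\{107\}$, so your observation about $8\mid a$ disposes of those instantly) and spells out the case $a=7$ in full, but the strategy is identical to yours.
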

\begin{proof}
Consider the following table:
\begin{center}
	\begin{tabular}{c|l}
		$a$ & $\mathcal{R}_a$ \\ \hline
		4 & $\{0, 1, 2, 3\}$ \\ 
		7 & $\{4, 6\}$ \\ 
		8 & $\{2, 3, 5, 6\}$ \\ 
		9 & $\{1, 4, 7, 8\}$ \\ 
		20 & $\{11, 15, 18, 19\}$ \\ 
		24 & $\{11, 14, 19, 21, 22\}$ \\ 
		40 & $\{27, 38\}$ \\ 
		104 & $\{99\}$ \\ 
		120 & $\{107\}$
	\end{tabular}
\end{center}

Calculating modulo 8 it can be checked fairly easily that for each $a\in \{$4, 7, 8, 9, 20, 24, 40, 104, 120$\}$ and every $k\in \mathbb{N}$ there exists an $r\in \mathcal{R}_a$ such that $ak+r$ is not of the form $4^s(8t+7), s, t\in \mathbb{N}$, and thus every natural number can be written as the sum of three terms of the sequence $\left( \floor{\frac{n^2}{a}} \right)_{n\in \mathbb{N}}$.

To demonstrate this, we show the case $a=7$. All the other cases are done in exactly the same way.

For $k\equiv 1, 2, 3, 6\text{ or } 7$ (mod 8) we have $7k+4 \equiv 3, 2, 1, 6 \text{ and } 5$ (mod 8), respectively, and for $k\equiv 0, 4 \text{ or } 5$ (mod 8) we have $7k+6 \equiv 6, 2 \text{ and } 1$ (mod 8), respectively. Since $4^s(8t+7) \equiv 0, 4 \text{ or } 7$ (mod 8), $s, t\in \mathbb{N}$, we conclude that for every $k\in \mathbb{N}$ we can write $7k+r$, for $r\in \mathcal{R}_7=\{4, 6\}$, such that it is not of the form $4^s(8t+7), s, t\in \mathbb{N}$. The case now follows from Theorem \ref{hovedresultat}. 
\end{proof}

Further, one should note that the set of integers satisfying Conjecture \ref{main} is closed under multiplication by a square.

\begin{observation}\label{oplagt}
    Let $\mathcal{M}$ be the set of integers satisfying Conjecture \ref{main}. If $a\in \mathcal{M}$, then $ak^2\in \mathcal{M}$ for any integer $k>0$.
\end{observation}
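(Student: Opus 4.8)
The plan is to exploit the elementary identity $\floor{\frac{(kn)^2}{ak^2}} = \floor{\frac{n^2}{a}}$, which shows that the sequence attached to $a$ sits inside the sequence attached to $ak^2$ as the subsequence indexed by the multiples of $k$. Once this is established, the representability for $ak^2$ follows immediately from that for $a$, with no appeal to Legendre's theorem or to the sets $\mathcal{R}_a$ needed.

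First I would fix $a\in\mathcal{M}$ and an integer $k>0$, and let $N\in\mathbb{N}$ be arbitrary. Because $a\in\mathcal{M}$, the conjecture holds for $a$, so there exist $A,B,C\in\mathbb{N}$ with
\[
N = \floor{\frac{A^2}{a}} + \floor{\frac{B^2}{a}} + \floor{\frac{C^2}{a}}.
\]
Next I would verify the key identity: for every $n\in\mathbb{N}$ one has $\frac{(kn)^2}{ak^2} = \frac{k^2n^2}{ak^2} = \frac{n^2}{a}$ as exact rationals, whence $\floor{\frac{(kn)^2}{ak^2}} = \floor{\frac{n^2}{a}}$. Substituting this identity with $n=A,B,C$ into the displayed equation gives
\[
N = \floor{\frac{(kA)^2}{ak^2}} + \floor{\frac{(kB)^2}{ak^2}} + \floor{\frac{(kC)^2}{ak^2}},
\]
which exhibits $N$ as a sum of three terms of the sequence $\left(\floor{\frac{m^2}{ak^2}}\right)_{m\in\mathbb{N}}$, namely the terms indexed by $m=kA,\,kB,\,kC$. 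As $N$ was arbitrary, this yields $ak^2\in\mathcal{M}$.

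Since the argument is a one-line reduction, I do not expect any substantial obstacle; the only point demanding a moment's care is to confirm that $kA,kB,kC$ are genuine indices of the new sequence, which holds because $k$ and $A,B,C$ are all non-negative integers. I would also note for completeness that the reverse implication is neither claimed nor obtained here, as the sequence for $ak^2$ carries extra terms indexed by integers not divisible by $k$.
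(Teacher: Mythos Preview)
Your proof is correct and follows exactly the same approach as the paper: you use the identity $\floor{\frac{(kn)^2}{ak^2}} = \floor{\frac{n^2}{a}}$ to convert a representation with denominator $a$ into one with denominator $ak^2$. The paper's argument is simply a terser version of yours, presenting the chain of equalities without the surrounding commentary.
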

\begin{proof}
	This follows easily since for any $n$ we can find $A, B, C\in \mathbb{N}$ such that
	\begin{align*}
        n &= \floor{\frac{A^2}{a}} + \floor{\frac{B^2}{a}} + \floor{\frac{C^2}{a}}\\
          &= \floor{\frac{(Ak)^2}{ak^2}}+ \floor{\frac{(Bk)^2}{ak^2}} + \floor{\frac{(Ck)^2}{ak^2}}.
    \end{align*}
\end{proof}
Knowing this, we see that since Conjecture \ref{main} is satisfied for $a= 3, 9, 4$, and $8$, it must also hold for $a=3^k$ for any positive integer $k$ and for $a=2^k, k>1$. 

Finally, using Observation \ref{oplagt}, Corollary \ref{vals}, and the fact \cite{analyt} that Conjecture \ref{main} holds for $a=3$, we get that the conjecture holds for the following values up to 120.
\begin{align*}
  a\in \{&3, 4, 7, 8, 9, 12, 16, 20, 24, 27, 28, 32, 36, 40, 48, \\ &63, 64, 72, 75, 80, 81, 96, 100, 104, 108, 112, 120\}.
\end{align*}
Unfortunately, it seems that the method deployed in Theorem \ref{hovedresultat} is not extendable to other cases, since its success relies on $\mathcal{R}_a$, and in general $\mathcal{R}_a$ does not contain the necessary elements for the condition in the theorem to be satisfied.

\section{Acknowledgement}
The authors would like to thank Jan Agentoft Nielsen for his suggestions that helped to improve the manuscript.

\textbf{2010 Mathematics Subject Classification:} Primary 11B13.\\
\textbf{Keywords:} additive base, Legendre's theorem.

\end{document}